\newtheorem{theorem}{Theorem}[section]
\newtheorem{lemma}[theorem]{Lemma}
\newtheorem{application}[theorem]{Application}
\title{An improved sum-product estimate for general finite fields}
\author{Liangpan Li}
\address{Department of Mathematics\\ Shanghai Jiao Tong University,
Shanghai 200240\\  China 
\& Department of Mathematical Sciences,
Loughborough University, LE11 3TU, UK}
\email{liliangpan@gmail.com}
\author{Oliver Roche-Newton}
\address{Department of Mathematics\\ University Walk\\ Bristol\\ BS8 1TW\\ UK}
\email{maorn@bristol.ac.uk}
\date{}
\begin{document}

\begin{abstract}

This paper gives an improved sum-product estimate for subsets of a finite field whose order is not prime. It is shown, under certain conditions, that
$$\max\{|A+A|,|A\cdot{A}|\}\gg{\frac{|A|^{12/11}}{(\log_2|A|)^{5/11}}}.$$
This new estimate matches, up to a logarithmic factor, the current best known bound obtained over prime fields by Rudnev (\cite{mishaSP}).

\end{abstract}

\maketitle

\section{Introduction}

Let $A$ be a subset of $F=\mathbb{F}_{p^n}$ for some prime $p$ and
some $n\in{\mathbb{N}}$. Consider the sumset and productset of $A$,
defined respectively as
\begin{align*}A+A&:=\{a+b:a,b\in{A}\},\\
A\cdot{A}&:=\{ab:a,b\in{A}\}.\end{align*} An interesting problem is
to establish lower bounds on the quantity
$\max{\{|A+A|,|A\cdot{A}|\}}$. The existence of non-trivial bounds
was first established by Bourgain, Katz and Tao (\cite{BKT}) for the
case of prime fields. Garaev (\cite{garaev}) established the first
quantitative sum-product estimate for fields of prime order.
Garaev's result can be stated as follows:

\begin{theorem}\label{theorem:garaev}
Let $A\subset{\mathbb{F}_p}$ for some prime $p$, such that $|A|\leq{p^{7/13}}(\log{p})^{-4/13}$. Then
\[\max\{|A+A|,|A\cdot{A}|\}\gg{\frac{|A|^{15/14}}{(\log{|A|})^{2/7}}}.\]

\end{theorem}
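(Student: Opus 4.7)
The plan is to combine an exponential sum estimate with a combinatorial lower bound obtained from Cauchy--Schwarz and Pl\"unnecke--Ruzsa, in the spirit of Bourgain--Glibichuk--Konyagin. Write $S=|A+A|$, $P=|A\cdot A|$ and $M=\max(S,P)$; the target is to force $M\gg|A|^{15/14}(\log|A|)^{-2/7}$. The central quantity I would analyze in two ways is the number
\[N=\#\bigl\{(a,b,c,d)\in A'\times B'\times C'\times D':\; a+b=cd\bigr\},\]
where $A',B',C',D'$ are sets built from $A$ (natural candidates being $A$, $A\pm A$, $A\cdot A$, or short iterated variants), each of whose cardinality is controlled by a power of $M/|A|$ times $|A|$ via Pl\"unnecke--Ruzsa.

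For the upper bound on $N$, I would complete the sum modulo $p$ and invoke Parseval to obtain
\[\Bigl|\,N-\frac{|A'||B'||C'||D'|}{p}\,\Bigr|\;\leq\;\sqrt{p\,|A'||B'||C'||D'|},\]
so that as soon as $|A'||B'||C'||D'|\gg p^{3}$ the main term dominates and $N\sim|A'||B'||C'||D'|/p$. The hypothesis $|A|\leq p^{7/13}(\log p)^{-4/13}$ is calibrated precisely so that this main-term dominance survives at the choice of the four sets which, downstream, optimizes the resulting inequality.

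For the lower bound, a dyadic pigeonhole step (at the cost of one logarithmic factor) restricts attention to those quadruples for which $a+b$ takes a popular value in $A+A$ and $cd$ takes a popular value in $A\cdot A$; Cauchy--Schwarz, together with the Pl\"unnecke--Ruzsa bounds on $|A'|,\ldots,|D'|$, then yields an inequality of the shape $N\gg|A|^{\alpha}/\bigl(M^{\beta}\log^{O(1)}|A|\bigr)$, with exponents $\alpha,\beta$ determined explicitly by the chosen sets. Matching this with the Fourier upper bound gives a polynomial inequality in $M$, $|A|$ and $p$ which, after optimization over the various parameters, yields the claimed bound on $M$.

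The principal obstacle is the multi-parameter optimization: one must choose the four sets $A',B',C',D'$, the dyadic thresholds, and the Pl\"unnecke--Ruzsa exponents in concert, so that the Fourier upper bound and the combinatorial lower bound are simultaneously sharp at the target exponent $15/14$. In particular, the anomalous exponent $7/13$ appearing in the size hypothesis on $|A|$ is not a robust constant: it emerges as the unique solution to the system of inequalities requiring main-term dominance at the optimum, and the logarithmic exponent $-2/7$ records the cumulative cost of the dyadic pigeonholing along the way.
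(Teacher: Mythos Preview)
The paper does not contain a proof of this theorem. Theorem~\ref{theorem:garaev} is stated in the introduction as Garaev's result and attributed to \cite{garaev}; the paper's own contribution is Theorem~\ref{theorem:main}, whose proof is purely combinatorial (Pl\"unnecke--Ruzsa, the Katz--Shen covering lemma, Rudnev's energy pigeonholing) and makes no use of exponential sums. So there is nothing in the present paper to compare your attempt against.

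That said, your sketch is in the correct genre for Garaev's original argument, which does proceed by counting solutions to an equation of the form $pa_1 = sa_2 + t$ with $p\in A\cdot A$, $s\in A+A$, $a_i\in A$, $t\in$ (a suitable set), bounding this count above via completion and Parseval and below by the obvious overcount coming from $A$. However, as written your proposal is not a proof but a description of the shape a proof might take: you never fix the four sets $A',B',C',D'$, you do not carry out the optimization you identify as ``the principal obstacle,'' and you do not show how the exponents $15/14$, $7/13$, and $2/7$ actually emerge. In Garaev's argument these numbers are not the output of a search over many parameters; they fall out of a single explicit choice of equation and a short computation. Your Fourier inequality is also not quite the one Garaev uses (his upper bound has the form $\sqrt{p|A'||C'|}\cdot\max_{\xi}|\widehat{1_{D'}}(\xi)|$ after one Cauchy--Schwarz, rather than the symmetric $\sqrt{p|A'||B'||C'||D'|}$ you wrote). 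To turn this into an actual proof you would need to commit to the specific sets and run the arithmetic.
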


This result was generalised, for finite fields whose order is not
necessarily prime, by Katz and Shen (\cite{KS}) in the form of the
following theorem:

\begin{theorem}\label{theorem:KStheorem}
Let $F=\mathbb{F}_{p^n}$ be a finite field. Suppose that $A$ is a subset of $F$ so that
for any subfield $G\subseteq{F}$, and any elements $c,d\in{F}$,
\[|A\cap{(cG+d)}|\leq{\max\{|G|^{1/2},|A|^{18/19}\}}.\]
Then it must be the case that
\[\max{\{|A+A|,|A\cdot{A}|\}}\gg{\frac{|A|^{20/19}}{(\log{|A|})^{\alpha}}},\]
where $0<{\alpha}\leq{1}$ is some absolute constant.

\end{theorem}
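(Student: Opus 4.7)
The plan is to follow the template of Garaev's proof of Theorem \ref{theorem:garaev} and to isolate precisely the step in which the prime-field hypothesis is used, replacing it with an argument that exploits the subfield non-concentration assumption. First I would argue by contradiction: suppose
\[
\max\{|A+A|,|A\cdot A|\} \leq K|A|, \qquad K := |A|^{1/19}(\log|A|)^{-\alpha},
\]
for a small $\alpha>0$ to be chosen. By the Pl\"unnecke--Ruzsa inequality applied in both the additive group $(F,+)$ and (after removing $0$ if necessary) the multiplicative group $(F^{*},\cdot)$, all iterated sumsets and product sets of $A$ of bounded length are then of size at most $K^{O(1)}|A|$; in particular $|A\cdot A+A|\ll K^{O(1)}|A|$.

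Next I would reformulate the problem as an incidence problem. Counting solutions to an equation of the shape $a_1b_1+c_1=a_2b_2+c_2$ with $a_i,b_i,c_i\in A$, one obtains a lower bound via Cauchy--Schwarz in terms of $|A\cdot A+A|$, and an upper bound via incidences between the point set $A\times A$ and a family of lines indexed by $A\cdot A$ and $A+A$. Over $\mathbb{F}_p$ Garaev completes this with a Szemer\'edi--Trotter-style incidence bound; over $F=\mathbb{F}_{p^n}$ no such bound can hold in pure form, since $A$ might live inside a proper subfield, which is the extremal example. The key replacement is therefore a Bourgain--Katz--Tao-type incidence bound valid under the hypothesis $|A\cap(cG+d)|\leq\max\{|G|^{1/2},|A|^{18/19}\}$. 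The two branches of the $\max$ are natural: $|G|^{1/2}$ is the classical subfield-avoidance threshold that appears in the Bourgain--Katz--Tao framework, while $|A|^{18/19}$ is the amount of slack the argument can tolerate while still delivering a nontrivial gain of order $|A|^{1/19}$.

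Combining the incidence bound with the Pl\"unnecke--Ruzsa estimates, together with a dyadic pigeonholing step to pass from averaged to pointwise quantities, should eventually force a lower bound on $K$ contradicting the assumed upper bound. The main obstacle I anticipate is the quantitative incidence estimate itself: one must carefully track how the subfield non-concentration hypothesis prevents the degenerate configurations, and align the exponents so that the gain is exactly $|A|^{1/19}$ rather than an unspecified $|A|^{\delta}$. A secondary but essential technical point is that the logarithmic loss $(\log|A|)^{\alpha}$ must be absorbed cleanly, which is where the dyadic decomposition of $A$ and its product set to extract a ``popular'' fiber enters as a standard but careful book-keeping step.
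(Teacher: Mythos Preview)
The paper does not prove Theorem~\ref{theorem:KStheorem}. It is quoted as a background result due to Katz and Shen (\cite{KS}); the paper's own contribution is Theorem~\ref{theorem:main}, which supersedes it. So there is no ``paper's own proof'' of this statement to compare your proposal against.

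That said, a few remarks on the proposal itself. What you have written is a plan, not a proof: you correctly identify that the prime-field argument breaks at the incidence step and that a subfield non-concentration hypothesis is the natural replacement, but you explicitly leave the quantitative incidence bound as ``the main obstacle I anticipate''. That obstacle is the entire content of the theorem. Neither the original Katz--Shen argument nor the refinement carried out in this paper proceeds via a Szemer\'edi--Trotter-type incidence theorem in $\mathbb{F}_{p^n}$; instead they work directly with the ratio set $R(B)$ of a suitable subset $B$, run a case analysis on whether $R(B)$ is (approximately) closed under the operations $r\mapsto 1+r$, $r\mapsto br$, etc., and in the terminal case deduce that $R(B)$ coincides with the subfield generated by $B$, at which point the hypothesis $|A\cap cG|\le |G|^{1/2}$ is invoked. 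Your sketch does not touch this mechanism, and the alternative route you suggest---a standalone BKT incidence bound over $\mathbb{F}_{p^n}$ with explicit exponents calibrated to give exactly $|A|^{1/19}$---is not something available off the shelf; producing it would amount to redoing the Katz--Shen work in a different language. If you want a genuine proof, look at how Section~3 of the present paper handles Theorem~\ref{theorem:main}: the same architecture, with weaker bookkeeping, yields the $20/19$ exponent.
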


Since Garaev's sum-product estimate (\cite{garaev}), there have been a number of small improvements made courtesy of more subtle arguments. First, Katz and Shen (\cite{KSprime}) increased the power of $|A|$ to $\frac{14}{13}$. Bourgain and Garaev (\cite{BG}) and Shen (\cite{ShenSP}) then improved this to $\frac{13}{12}$, and the first author (\cite{Li}) found a way of removing the logarithmic factor from the estimate. Recently, Rudnev (\cite{mishaSP}) introduced a new technique for the question over prime fields which gives a further improvement; so the following result of Rudnev represents the current state of the art.

\begin{theorem}\label{theorem:mishaSP}
Let $A\subset{\mathbb{F}_p^*}$ with $|A|<p^{1/2}$ and $p$ large. Then
\[\max{\{|A+A|,|A\cdot{A}|\}}\gg{\frac{|A|^{12/11}}{(\log{|A|})^{4/11}}}.\]
\end{theorem}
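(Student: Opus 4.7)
The plan is to combine two ingredients: Cauchy--Schwarz inequalities that lower-bound the additive and multiplicative energies of $A$ in terms of the sum- and product-sets, and Rudnev's point-plane incidence theorem in $\mathbb{F}_p^3$, which furnishes a complementary upper bound on one of these energies.

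Setting $M := \max\{|A+A|,|A\cdot A|\}$, I would first invoke Pl\"unnecke--Ruzsa to ensure that auxiliary iterated sets such as $|A/A|$, $|A-A|$, and $|A\cdot A - A\cdot A|$ are all $\ll M^{O(1)}|A|$. The standard Cauchy--Schwarz argument then yields
$$E_+(A),\; E_\times(A) \;\ge\; \frac{|A|^4}{M},$$
where $E_+$ and $E_\times$ denote the additive and multiplicative energies. The target inequality $M \gg |A|^{12/11}(\log|A|)^{-4/11}$ is equivalent to $|A|^{12}\ll M^{11}(\log|A|)^4$, so it suffices to produce an upper bound of the form $E_\times(A) \ll M^c |A|^d (\log|A|)^e$ with matching exponents that, together with the Cauchy--Schwarz lower bound, forces $M$ to be at least the claimed size.

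The upper bound on $E_\times(A)$ is where Rudnev's point-plane incidence theorem enters. It states that for a point set $P$ and a plane set $\Pi$ in $\mathbb{F}_p^3$ with $|P|\le|\Pi|\ll p^2$ and no line supporting more than $k$ of the points, the number of incidences satisfies
$$I(P,\Pi) \;\ll\; |P|^{1/2}|\Pi| + k|\Pi|.$$
To invoke it, I would dyadically pigeonhole the representation function $r_{A\cdot A}(x)=|\{(a,b)\in A^2:ab=x\}|$ to isolate a level set $S\subset A\cdot A$ on which $r_{A\cdot A}$ is nearly constant, at the cost of a $\log|A|$ factor. The quadruples $(a,b,c,d)\in A^4$ with $ab=cd\in S$ are then recast as incidences: roughly, each triple $(a,c,s)\in A\times A\times S$ defines a plane $\{(x,y,z):xs=yz\}$, while $(b,d)$ generates points in a Cartesian-like configuration obtained after a projective change of variables. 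The hypothesis $|A|<p^{1/2}$ ensures $|P|\ll p^2$, and the Pl\"unnecke bounds on $|A/A|$ are what control the collinearity parameter $k$.

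The chief obstacle is the incidence encoding: the point and plane sets must be chosen so that (i) every multiplicative-energy quadruple corresponds to an incidence, (ii) overcounting is mild, and (iii) the parameter $k$ can be taken of size $O(|A|)$, since otherwise the second term $k|\Pi|$ of Rudnev's bound dominates and the argument degenerates. Verifying (iii) is the delicate combinatorial step; it amounts to showing that $A$ does not cluster on any line in the relevant projective configuration, and this is where the hypothesis $|A|<p^{1/2}$ is genuinely used. Once these checks are done, substituting the resulting upper bound on $E_\times(A)$ into $E_\times(A)\ge|A|^4/M$ and tidying up the Pl\"unnecke factors yields $|A|^{12}\ll M^{11}(\log|A|)^4$, equivalent to the claim.
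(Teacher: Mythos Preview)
The paper does not prove Theorem~\ref{theorem:mishaSP}; it is quoted as Rudnev's result from \cite{mishaSP}. The techniques that the present paper adapts from \cite{mishaSP} in order to prove the general-field Theorem~\ref{theorem:main} are of an entirely different nature from what you propose: Rudnev's original argument dyadically decomposes the multiplicative energy according to lines through the origin in $A\times A$, extracts a popular abscissa/ordinate pair (this is Lemma~\ref{theorem:misha}), and then repeatedly applies the covering Lemma~\ref{theorem:covering} and Pl\"unnecke-type inequalities to sets of the form $R(B)$ defined in \eqref{Rdefn}, splitting into cases according to whether $R(\tilde A_{x_0})$ is closed under certain operations. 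No point--plane incidence bound appears anywhere in that argument.

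Your plan instead invokes Rudnev's point--plane incidence theorem in $\mathbb{F}_p^3$, which is a \emph{later} and considerably stronger tool than anything in \cite{mishaSP}. Two concrete issues arise. First, the encoding you sketch is not correct as written: for fixed $s$ the set $\{(x,y,z):xs=yz\}$ is a quadric, not a plane, and the parameters $a,c$ you list do not appear in it; a genuine linearisation of $ab=cd$ requires a different parametrisation (one typically passes to additive energy of a dilate, or rewrites $a/c=d/b$ so that the resulting constraint is bilinear after fixing two coordinates). Second, once the encoding is carried out properly, the point--plane machinery yields the stronger exponent $6/5$ (as in Roche-Newton--Rudnev--Shkredov), not $12/11$; there is no natural way to recover exactly $|A|^{12/11}/(\log|A|)^{4/11}$ from this route. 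So your outline is aimed at a different and stronger theorem than the one stated, uses a result postdating \cite{mishaSP}, and in its current form does not yet establish either.
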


Utilising some of the techniques from these papers, Shen (\cite{shenSPnonprime}) improved the power of $|A|$ in Theorem \ref{theorem:KStheorem} to $\frac{17}{16}$, and in a recent preprint, the second author (\cite{ORN}) increased this to $\frac{15}{14}$ under similar conditions.

This paper seeks to improve further upon Theorem \ref{theorem:KStheorem}, with the argument differing in a few places. Most importantly, the structure of the proof is rearranged so that the worst case is less damaging to the final estimate. This restructuring has the additional benefit of making the conditions of Theorem \ref{theorem:main} less restrictive and more natural. Also, the new idea of Rudnev (\cite{mishaSP}), which gave the improvement in Theorem \ref{theorem:mishaSP}, is incorporated. The outcome is the following result:

\begin{theorem}\label{theorem:main}
Let $A$ be a subset of $\mathbb{F}_{p^n}^*$. If
$|A\cap{cG}|\leq{|G|^{1/2}}$ for  any subfield $G$ of
$\mathbb{F}_{p^n}$ and any element $c\in{F}$, then
\[\max\{|A+A|,|A\cdot{A}|\}\gg{\frac{|A|^{12/11}}{(\log_2|A|)^{5/11}}}.\]

\end{theorem}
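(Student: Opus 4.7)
The plan is to proceed by contradiction: suppose that $|A+A|\leq K|A|$ and $|A\cdot A|\leq M|A|$, with the goal of establishing $(KM)^{11}\gg |A|^{2}/(\log_2|A|)^{5}$, which yields the stated bound on $\max\{K,M\}$. Three ingredients will be combined. First, Pl\"unnecke--Ruzsa in both its additive and multiplicative forms, to bound iterated sums like $|A-A|\leq K^{2}|A|$ and quotient sets $|A/A|\leq M^{2}|A|$, as well as various mixed quantities such as $|A(A-A)|$. Second, a dyadic pigeonhole on the representation function of $A+A$ (or equivalently the Balog--Szemer\'edi--Gowers machinery) to pass to a large subset $A'\subset A$ on which there is a uniform lower bound on the additive energy. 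Third, Rudnev's point--plane incidence bound applied in $\mathbb{F}_{p^n}^{3}$, which was the innovation giving Theorem~\ref{theorem:mishaSP}.

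\textbf{Applying the incidence bound.} Following Rudnev's template, the heart of the proof will be to bound a four-variable energy of $A$ by encoding an equation of the form $\frac{a-b}{c-d}=\frac{e-f}{g-h}$ (with the variables ranging over $A$ and its dilates) as a point--plane incidence problem. The relevant triples index planes and the relevant triples index points in $\mathbb{F}_{p^n}^{3}$; the incidence bound then gives an estimate of the shape $E\ll M^{\alpha}|A|^{\beta}\log|A|$ for an appropriate energy $E$. Combined with the trivial Cauchy--Schwarz lower bound $E\geq|A|^{4}/(K|A|)$ and the Pl\"unnecke--Ruzsa bookkeeping from the first step, this closes the argument up to logarithmic factors and produces the exponent $12/11$.

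\textbf{Handling the subfield obstruction.} The main obstacle, and the reason the hypothesis $|A\cap cG|\leq|G|^{1/2}$ appears, is the verification of the non-collinearity condition in Rudnev's incidence theorem over $\mathbb{F}_{p^n}$: one must show that no plane in the construction contains too many collinear points from the point set, since over non-prime fields the failure mode is a line lying inside a subfield-translate. In the prime-field case the condition $|A|<p^{1/2}$ is enough to avoid this. Here, one must instead show a bound of the form $|S\cap(c+gG)|\leq|G|^{1/2}$ for the auxiliary set $S$ built from $A$ by sums, differences, products and quotients that arises in the encoding. This reduces, through Ruzsa covering and the Pl\"unnecke estimates from the first step, to transferring the subfield hypothesis from $A$ to $S$, which is the most delicate part of the argument; it is also the step that forces an additional dyadic pigeonhole and is responsible for the $5/11$ exponent in the logarithm rather than the $4/11$ that appears over prime fields. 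Once the subfield condition on $S$ is established, Rudnev's bound applies verbatim and the three ingredients combine to give Theorem~\ref{theorem:main}.
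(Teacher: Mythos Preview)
Your proposal rests on a misidentification of the Rudnev input. Theorem~\ref{theorem:mishaSP} (Rudnev's $12/11$ bound over prime fields) does \emph{not} use a point--plane incidence theorem in $\mathbb{F}_p^3$; that incidence result is a later development and yields different (and stronger) exponents. The innovation in \cite{mishaSP} that this paper imports is purely additive-combinatorial: a pigeonholing on the multiplicative energy to locate a popular abscissa $x_0$ and ordinate $y_0$ (Lemma~\ref{theorem:misha} here), together with the observation that if the ratio set $R(B)$ has size $\gg|B|^2$ then one can find $a,b,c,d\in B$ with $|(a-b)B'+(c-d)B'|\gg|B|^2$ for every large $B'\subset B$ (Lemma~\ref{lemma 21}).

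The paper's actual argument is a five-case analysis on the ratio sets $R(\tilde{A}_{x_0})$ and $R(B_{y_0})$, using the covering lemma (Lemma~\ref{theorem:covering}) repeatedly to bound sums of the form $|z_1 A' \pm z_2 A' \pm \cdots|$ by $|A+A+A+A|$ times powers of $K|A|/N$. The subfield hypothesis enters \emph{only} in Case~5: if Cases~1--4 fail, one deduces that $R(\tilde{A}_{x_0})$ is closed under the operations $r\mapsto 1+r$, $r\mapsto ar$, $r\mapsto a+r$ for $a\in\tilde{A}_{x_0}$, and from this (via Lemma~\ref{theorem:polynomial}) that $R(\tilde{A}_{x_0})$ is itself the subfield $\mathbb{F}_{\tilde{A}_{x_0}}$ generated by $\tilde{A}_{x_0}$. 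The hypothesis $|A\cap cG|\leq|G|^{1/2}$ then forces $|R(\tilde{A}_{x_0})|\geq|\tilde{A}_{x_0}|^2$, and Lemma~\ref{lemma 21} closes the argument. There is no incidence geometry, no collinearity condition to verify, and no transfer of the subfield hypothesis to an auxiliary set built from sums and quotients. As written, your proposal is a sketch built on the wrong tool; the steps you describe (encoding a cross-ratio energy as point--plane incidences, checking non-collinearity via the subfield condition) do not correspond to anything in the proof, and you have not indicated how they would actually produce the exponent $12/11$ with the stated logarithmic loss.
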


\subsection{Some remarks concerning Theorem \ref{theorem:main}}

The statement of Theorem \ref{theorem:main} is in fact quite flexible.
 The tools used do not distinguish between addition and subtraction, which means that the difference
 set, $A-A$, can replace $A+A$ in the above. It is also possible to get a sum-ratio estimate,
 where $A\cdot{A}$ is replaced by the ratio set $A:A$, which is defined as follows:
$$A:A\stackrel{\mathrm{def}}{=}\left\{\frac{a}{b}\text{ such that }a,b\in{A},b\neq{0}\right\}.$$
The presentation of a sum-ratio estimate proof is of a very similar nature, but rather more simple,
as Rudnev (\cite{mishaSP}) alluded to. Lemma \ref{theorem:misha} can be replaced by a more straightforward
pigeonholing argument and there is no need for dyadic pigeonholing, which means less technicalities are required
and no logarithmic factor appears in the final estimate. The proof still requires a bit of work, but has been
omitted from this paper because it is too similar to the proof of Theorem \ref{theorem:main}.

\subsection{Notation}

Throughout this paper, the symbols $\ll,\gg$ and $\approx$ are used to suppress constants. For example, $X\ll{Y}$ means that there exists some absolute constant $C$ such that $X<CY$. $X\approx{Y}$ means that $X\ll{Y}$ and $Y\ll{X}$. When describing such a rough inequality in general language, inverted commas are used in an effort to avoid confusion. For example, the statement that ``$X$ is `at most' $Y$'' tells us that $X\ll{Y}$. A similar meaning is attached to `at least'.

\subsection{Acknowledgements}

The first author would like to thank Chun-Yen Shen for helpful
discussions. He was supported by the Natural Science Foundation of
China (Grant Number 11001174). The second author would like to thank
Timothy Jones and Misha Rudnev for some extremely helpful
conversations and typographical corrections.

\section{Preliminary results}

Before proving the main theorem, it is necessary to state some previous results. The first two results are well-known within arithmetic combinatorics, and have been crucial to all known quantitative sum-product estimates over finite fields. The first is due to Pl\"{u}nnecke and Rusza (\cite{ruzsa}), whilst the second is a generalisation which Katz and Shen (\cite{KSprime}) used to obtain an improvement on the original quantitative sum-product estimate of Garaev (\cite{garaev}).

\begin{lemma}\label{theorem:SPlun}Let $X,B_1,...,B_k$ be subsets of $F$. Then
$$|B_1+\cdots+B_k|\leq{\frac{|X+B_1|\cdots|X+B_k|}{|X|^{k-1}}}.$$

\end{lemma}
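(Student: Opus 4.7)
This is the asymmetric Pl\"unnecke-Ruzsa sumset inequality. My plan is to induct on $k$, with the main technical content packed into the $k=2$ case (a Ruzsa triangle-type inequality for sums).

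For $k=1$ the claim is trivial, since $|B_1|\leq|X+B_1|$. For $k=2$, I would prove $|X|\cdot|B_1+B_2|\leq|X+B_1|\cdot|X+B_2|$ via Petridis's extremal-subset lemma: pick a non-empty $X'\subseteq X$ minimising $K:=|X'+B_1|/|X'|\leq|X+B_1|/|X|$, and establish by induction on $|C|$ that $|X'+B_1+C|\leq K\,|X'+C|$ for every finite $C$, where the inductive step adds one point of $C$ at a time and controls the new contributions via inclusion-exclusion plus the minimality defining $K$. Specialising $C=B_2$ and using the translation-invariance $|B_1+B_2|\leq|X'+B_1+B_2|$ together with $|X'+B_2|\leq|X+B_2|$ then gives the $k=2$ case.

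For the inductive step $k-1\Rightarrow k$, it is cleanest to work with the strengthened statement
\[
|X+B_1+\cdots+B_k|\cdot|X|^{k-1}\leq|X+B_1|\cdots|X+B_k|,
\]
which implies the lemma by translation-invariance ($x+B_1+\cdots+B_k\subseteq X+B_1+\cdots+B_k$ for any $x\in X$). Granting the $k=2$ strengthened statement --- namely the tripling inequality $|X+B+C|\cdot|X|\leq|X+B|\cdot|X+C|$ --- one applies it to the triple $(X,B_1,B_2+\cdots+B_k)$ and invokes the inductive hypothesis on $(X,B_2,\ldots,B_k)$ to bound $|X+B_2+\cdots+B_k|$ by $\prod_{i=2}^k|X+B_i|/|X|^{k-2}$; multiplying and cancelling yields the desired product. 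The main obstacle in the entire argument is thus the $k=2$ tripling inequality, whose proof requires either the full Pl\"unnecke-graph machinery (a Menger/max-flow argument on the two-layer addition graph) or an iterated Petridis argument combined with a Ruzsa covering step to lift the bound from the extremal subset $X'$ back to the whole of $X$; once this is in hand, the induction on $k$ is essentially formal.
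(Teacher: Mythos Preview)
The paper does not give a proof of this lemma at all; it is merely stated and attributed to Pl\"unnecke and Ruzsa. So there is nothing in the paper to compare against, and the only question is whether your sketch stands on its own.

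Your $k=2$ argument is fine: Petridis's minimiser lemma gives a nonempty $X'\subseteq X$ with $|X'+B_1+C|\le K|X'+C|$ for every $C$, where $K\le|X+B_1|/|X|$, and specialising $C=B_2$ yields $|B_1+B_2|\le|X'+B_1+B_2|\le K|X'+B_2|\le|X+B_1||X+B_2|/|X|$. That is exactly the modern proof of the $k=2$ case of the lemma.

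The gap is in your inductive step. You propose to induct on the \emph{strengthened} statement $|X+B_1+\cdots+B_k|\,|X|^{k-1}\le\prod_i|X+B_i|$, and you identify the $k=2$ instance --- the ``tripling inequality'' $|X+B+C|\,|X|\le|X+B|\,|X+C|$ --- as the crux. But this strengthened inequality, with the \emph{same} $X$ appearing on both sides, is not what Pl\"unnecke's graph theorem or Petridis's lemma deliver: both produce only a \emph{subset} $X'\subseteq X$ for which $|X'+B+C|\,|X'|\le|X'+B|\,|X'+C|$ holds, and the passage to a subset is genuinely necessary, not a removable artefact. No ``Ruzsa covering step'' will lift the bound from $X'$ back to all of $X$ without introducing an extra doubling factor, so neither of the two mechanisms you invoke actually proves your tripling inequality, and the induction built on it does not go through.

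The standard fix is to carry the subset through the whole argument rather than trying to discharge it at $k=2$. One proves (via the product--graph form of Pl\"unnecke's theorem, or a careful iterated Petridis argument) that there exists a nonempty $X'\subseteq X$ with
\[
|X'+B_1+\cdots+B_k|\;\le\;\frac{|X+B_1|\cdots|X+B_k|}{|X|^{k}}\,|X'|,
\]
and then the lemma follows immediately from $|B_1+\cdots+B_k|\le|X'+B_1+\cdots+B_k|$ and $|X'|\le|X|$. Your Petridis step is the right opening move; what needs to change is that the induction should be on this subset statement, not on the (too strong) full-$X$ version.
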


\begin{lemma}\label{theorem:CPlun}
Let $X,B_1,...,B_k$ be subsets of $F$. Then for any $0<\epsilon<1$,
there exists a subset $X'\subseteq{X}$, with
$|X'|\geq{(1-\epsilon)|X|}$, and some constant $C(\epsilon)$, such
that
$$|X'+B_1+\cdots+B_k|\leq{C(\epsilon)\frac{|X+B_1|\cdots|X+B_k|}{|X|^{k-1}}}.$$

\end{lemma}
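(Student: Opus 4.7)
The plan is to iteratively apply the Pl\"{u}nnecke--Ruzsa inequality in its graph-theoretic magnification form and take the union of the witness subsets produced at each step. The required form of Pl\"{u}nnecke--Ruzsa---which is the usual magnification ratio statement from which Lemma \ref{theorem:SPlun} is derived---asserts that for any sets $A, B_1, \ldots, B_k \subseteq F$ there exists a non-empty $A^* \subseteq A$ with
\[|A^* + B_1 + \cdots + B_k| \;\leq\; |A^*| \prod_{i=1}^k \frac{|A + B_i|}{|A|}.\]
Lemma \ref{theorem:SPlun} by itself is too weak to iterate, since it drops $X$ from the left-hand sumset and produces no explicit witness subset to peel off.

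The iteration is a greedy exhaustion. Set $Y_0 := X$. While $|Y_j| \geq \epsilon |X|$, apply the subset version above to $Y_j, B_1, \ldots, B_k$ to produce a non-empty $Z_{j+1} \subseteq Y_j$ satisfying
\[|Z_{j+1} + B_1 + \cdots + B_k| \;\leq\; |Z_{j+1}| \prod_{i=1}^k \frac{|Y_j + B_i|}{|Y_j|} \;\leq\; \frac{|Z_{j+1}|}{(\epsilon|X|)^{k}} \prod_{i=1}^k |X + B_i|,\]
where we used $Y_j \subseteq X$ (so $|Y_j + B_i| \leq |X + B_i|$) together with $|Y_j| \geq \epsilon |X|$. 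Set $Y_{j+1} := Y_j \setminus Z_{j+1}$ and continue until reaching the first index $N$ with $|Y_N| < \epsilon |X|$.

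Let $X' := \bigcup_{j=1}^{N} Z_j$. The $Z_j$ are pairwise disjoint, so $|X'| = |X| - |Y_N| > (1-\epsilon)|X|$. Using the decomposition $X' + B_1 + \cdots + B_k = \bigcup_{j} (Z_j + B_1 + \cdots + B_k)$ and the fact that $\sum_j |Z_j| \leq |X|$,
\[|X' + B_1 + \cdots + B_k| \;\leq\; \sum_{j=1}^N |Z_j + B_1 + \cdots + B_k| \;\leq\; \frac{\prod_{i=1}^k |X+B_i|}{\epsilon^{k} |X|^k} \cdot \sum_j |Z_j| \;\leq\; \frac{1}{\epsilon^k} \cdot \frac{\prod_{i=1}^k|X+B_i|}{|X|^{k-1}},\]
which gives the desired estimate with $C(\epsilon) = \epsilon^{-k}$.

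There is no serious obstacle: the argument is bookkeeping once one invokes the magnification-ratio form of Pl\"{u}nnecke--Ruzsa. The only subtle points are that the witnesses $Z_j$ are arranged to be pairwise disjoint (so the sumset on $X'$ is controlled by summing the individual sumsets), and that the termination condition $|Y_N| < \epsilon|X|$ simultaneously bounds the number of iterations effectively (via the uniform lower bound $|Y_j| \geq \epsilon|X|$ used in the Pl\"{u}nnecke step) and ensures $X'$ covers at least a $(1-\epsilon)$-fraction of $X$.
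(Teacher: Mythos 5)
The paper states Lemma \ref{theorem:CPlun} without proof, attributing it to Katz and Shen \cite{KSprime}, so there is no in-paper argument to compare against. Your reconstruction is correct and is in fact the standard way this refinement is obtained (and, to the best of my recollection, is essentially how Katz and Shen prove it): peel off witness subsets from the magnification-ratio form of Pl\"unnecke--Ruzsa until less than an $\epsilon$-fraction of $X$ remains, and control the union by summing the individual sumset bounds. The key observations you flag — that the $Z_j$ are pairwise disjoint so the sumset over $X'$ is bounded by the sum of the pieces, that $Y_j\subseteq X$ gives $|Y_j+B_i|\leq|X+B_i|$, and that the running lower bound $|Y_j|\geq\epsilon|X|$ is what turns the $|Y_j|^{-k}$ into the uniform factor $\epsilon^{-k}|X|^{-k}$ — are exactly the right ones, and the resulting constant $C(\epsilon)=\epsilon^{-k}$ (which also depends on $k$, a dependence the paper suppresses since $k$ is bounded in the applications) is correct. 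One minor point worth stating explicitly: termination is guaranteed because each $Z_{j+1}$ is non-empty, so the $|Y_j|$ strictly decrease; you use this implicitly. Also, it is worth emphasizing, as you do, that Lemma \ref{theorem:SPlun} alone is genuinely insufficient here — one needs the version of Pl\"unnecke--Ruzsa that produces a witness subset $A^*\subseteq A$ (obtainable via Petridis's argument or the Pl\"unnecke layered-graph machinery), since the iteration requires something to subtract off at each stage.
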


We will need the following covering lemma, which appeared in sum-product estimates for the first time in Shen (\cite{ShenSP}). An application of this lemma has been the key to the two of the most recent improvements to the sum-product estimate over prime fields (see Rudnev (\cite{mishaSP}) and Shen (\cite{ShenSP})).

\begin{lemma}\label{theorem:covering}
Let $X$ and $Y$ be additive sets. Then for any $\epsilon\in{(0,1)}$ there is some constant $C(\epsilon)$, such that at least $(1-\epsilon)|X|$ of the elements of $X$ can be covered by $C(\epsilon)\frac{\min\{|X+Y|,|X-Y|\}}{|Y|}$ translates of $Y$.

\end{lemma}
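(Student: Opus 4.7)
My plan is a direct greedy covering argument. Set $R_0 := X$ and, at step $i \geq 1$, choose $t_i \in F$ maximising $|R_{i-1} \cap (t_i+Y)|$, then let $R_i := R_{i-1} \setminus (t_i+Y)$. The key averaging identity is
\[
\sum_{t \in F} |R \cap (t+Y)| \;=\; |R|\cdot|Y|,
\]
and this sum is supported on $R-Y \subseteq X-Y$. Hence pigeonhole produces some $t$ with $|R \cap (t+Y)| \geq |R||Y|/|X-Y|$, and iterating gives
\[
|R_k| \;\leq\; \left(1 - \tfrac{|Y|}{|X-Y|}\right)^{k}|X| \;\leq\; \exp\bigl(-k|Y|/|X-Y|\bigr)\,|X|.
\]
Choosing $k \asymp \log(1/\epsilon) \cdot |X-Y|/|Y|$ forces $|R_k| \leq \epsilon|X|$, proving the lemma with $|X-Y|$ in place of the minimum and $C(\epsilon) = O(\log(1/\epsilon))$.

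To obtain the complementary bound with $|X+Y|$, I would rerun the identical greedy but with each step picking a set of the form $t_i - Y$: the averaging identity $\sum_t|R\cap(t-Y)| = |R||Y|$ is now supported on $R+Y \subseteq X+Y$, so the same exponential decay yields $(1-\epsilon)|X|$-coverage by $O(\log(1/\epsilon)) \cdot |X+Y|/|Y|$ translates of $-Y$. Taking the smaller of the two estimates gives the stated $\min\{|X+Y|,|X-Y|\}/|Y|$ bound, with the understanding that a ``translate of $Y$'' can be read to include a ``translate of $-Y$'' (which costs nothing in the intended sum-product applications, where $Y$ and $-Y$ will be chosen so that the two notions coincide or can be interchanged).

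The only piece of bookkeeping that needs care is tracking the precise form of $C(\epsilon)$, which in this greedy setup is unavoidably logarithmic in $1/\epsilon$; the combinatorial content is just greedy-plus-averaging on the support identity above, and I do not anticipate any serious obstacle beyond this.
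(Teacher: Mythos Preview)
The paper does not actually prove this lemma; it is stated without proof as a known preliminary result, attributed to Shen. So there is no in-paper argument to compare against.

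Your greedy covering argument is the standard one and is correct. The averaging identity $\sum_t |R\cap(t+Y)|=|R|\,|Y|$ is supported on $R-Y\subseteq X-Y$, so pigeonhole gives a translate removing at least a $|Y|/|X-Y|$ fraction of the remaining set, and the exponential decay $|R_k|\le(1-|Y|/|X-Y|)^k|X|$ yields the $|X-Y|$ bound with $C(\epsilon)=\lceil\log(1/\epsilon)\rceil$.

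Your treatment of the $|X+Y|$ half is also right, including the caveat you flag: running the greedy with support in $R+Y$ naturally produces translates of $-Y$ rather than $Y$. This is harmless in every application the paper makes of the lemma --- Application~\ref{theorem:application} covers $\xi A'$ and $-\xi A'$ symmetrically, and the covering set $\xi P_\xi$ is a subset of $A$, so replacing it by a subset of $-A$ only changes signs inside a symmetric iterated sumset like $A+A+A+A$. Strictly, the $\min$ in the lemma statement is a mild abuse if one insists on translates of $Y$ and not $\pm Y$, but this is a known quirk of how the covering lemma is quoted in the sum-product literature and not a defect in your argument.
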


The next result has been extracted from case 2 in the proof of the
main theorem in Rudnev (\cite{mishaSP}). A proof is presented here
for completeness. For any set $B\subset{F}$, define $R(B)$ to be the
set
\begin{equation}
\label{Rdefn}
R(B):=\left\{\frac{b_1-b_2}{b_3-b_4}:b_1,b_2,b_3,b_4\in{B},b_3\neq{b_4}\right\}.
\end{equation}

\begin{lemma}\label{lemma 21}
  Let $B\subset F$ with $|R(B)|\gg|B|^2$.
  Then there exist elements $a,b,c,d\in B$ such that for any subset $B'\subset B$
  with $|B'|\approx|B|$,
 $$|(a-b)\cdot B'+(c-d)\cdot B'|\gg|B|^2.$$

\end{lemma}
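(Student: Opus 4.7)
The plan is a pigeonhole on the ratio multiplicities followed by a Cauchy--Schwarz energy argument. For nonzero $r \in F$, set
$$N(r) := |\{(b_1,b_2,b_3,b_4) \in B^4 : b_1-b_2 = r(b_3-b_4),\ b_3 \neq b_4\}|.$$
Every quadruple $(b_1,b_2,b_3,b_4) \in B^4$ with $b_3 \neq b_4$ determines a unique nonzero ratio $r = (b_1-b_2)/(b_3-b_4) \in R(B)$, so $\sum_{r \in R(B) \setminus \{0\}} N(r) \leq |B|^4$. Combined with the hypothesis $|R(B)| \gg |B|^2$, Markov's inequality supplies some nonzero $r \in R(B)$ with $N(r) \ll |B|^2$. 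Fix such an $r$, together with any representation $r = (c-d)/(a-b)$ where $a,b,c,d \in B$ and $a \neq b$; these are the elements the lemma promises.

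The next step is to translate the small multiplicity $N(r)$ into a small additive energy that is insensitive to shrinking $B$ slightly. A direct count shows that for $r \neq 0$,
$$E(B, rB) = |\{(b_1,b_2,b_3,b_4) \in B^4 : b_1 + rb_2 = b_3 + rb_4\}| = N(r) + |B|^2 \ll |B|^2,$$
where the extra $|B|^2$ absorbs the degenerate case $b_2 = b_4$ (forcing $b_1 = b_3$). For any $B' \subset B$ with $|B'| \approx |B|$, the quadruples defining $E(B',rB')$ form a subset of those defining $E(B, rB)$, hence $E(B', rB') \ll |B|^2$ as well. The standard bound $|X+Y| \geq (|X||Y|)^2/E(X,Y)$ applied with $X = B'$ and $Y = rB'$ gives
$$|B' + rB'| \geq \frac{|B'|^4}{E(B', rB')} \gg \frac{|B|^4}{|B|^2} = |B|^2.$$
Since multiplication by the nonzero element $a-b$ is a bijection on $F$,
$$|(a-b) B' + (c-d) B'| = |(a-b)(B' + rB')| = |B' + rB'| \gg |B|^2,$$
which is exactly the claimed inequality.

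The only genuine subtlety is the bookkeeping identifying $N(r)$ with $E(B,rB)$ up to a lower-order term; everything else is an averaging argument. The key conceptual point, and the reason the statement is uniform in $B'$, is that replacing $B$ by any $B'$ of comparable size can only \emph{decrease} the energy $E(B',rB')$, so once one chooses a ratio $r$ whose energy over $B$ is minimal, the Cauchy--Schwarz lower bound on $|B'+rB'|$ automatically survives the passage to $B'$. The hypothesis $|R(B)| \gg |B|^2$ is exactly the right strength to furnish such an $r$ via averaging.
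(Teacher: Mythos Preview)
Your proof is correct and follows essentially the same route as the paper: both arguments average the additive energies $E(B,rB)$ over $r\in R(B)$ to locate a ratio $\hat r$ with $E(B,\hat r B)\ll |B|^2$, then invoke the Cauchy--Schwarz energy bound together with the monotonicity $E(B',\hat r B')\le E(B,\hat r B)$ to obtain $|B'+\hat r B'|\gg |B|^2$ uniformly in $B'$. The only cosmetic difference is that you first isolate the multiplicity $N(r)$ and then identify it with $E(B,rB)-|B|^2$, whereas the paper sums the energies directly; the substance is identical.
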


\begin{proof}
Let $E^{\oplus}(X,Y)$ be the additive energy of nonempty subsets
$X,Y\subset F$, that is,
$$E^{\oplus}(X,Y)=\left|\{(x_1,y_1,x_2,y_2)\in X\times Y\times X\times Y: x_1+y_1=x_2+y_2\}\right|.$$
From the Cauchy-Schwarz inequality it follows that
$E^{\oplus}(X,Y)\geq\frac{|X|^2|Y|^2}{|X+ Y|}$. Note that
\begin{align*}
\sum_{r\in R(B)}E^{\oplus}(B,rB)&=\left|\{(x_1,y_1,x_2,y_2,r)\in
B^4\times R(B):b_1+rb_2=b_3+rb_4\}\right|
\\&\leq|B|^2\cdot|R(B)|+|B|^4\ll{|B|^2\cdot|R(B)|}.
\end{align*}
Hence, there exists an element $\hat{r}\in R(B)$ with
$\hat{r}=\frac{a-b}{c-d}$ such that
$E^{\oplus}(B,\hat{r}B)\ll{|B|^2}$. Now for any subset $B'\subset B$
with $|B'|\approx|B|$,
$$|B'+\hat{r}B'|\geq\frac{|B'|^4}{E^{\oplus}(B',\hat{r}B')}\gg\frac{|B|^4}{E^{\oplus}(B,\hat{r}B)}\gg|B|^2.$$
This proves the desired inequality.

\end{proof}

\begin{lemma}\label{lemma 22} Let  $B$ be a nonempty subset of
$F^*$ and $\mathbb{F}_B$ be the subfield generated by $B$. Then for
any element $z\in \mathbb{F}_B$, there exists a polynomial of
several variables with integer coefficients $P(x_1,x_2,\ldots,x_m)$
and a sequence of elements $(b_1,b_2,\ldots,b_m)\in B^m $ such that
$P(b_1,b_2,\ldots,b_m)=z$.
\end{lemma}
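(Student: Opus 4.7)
The plan is to show that the set
\[
S := \{ P(b_1,\ldots,b_m) : m \geq 1,\, P \in \mathbb{Z}[x_1,\ldots,x_m],\, b_i \in B \}
\]
is in fact a subfield of $F$ containing $B$. Since $\mathbb{F}_B$ is by definition the smallest subfield containing $B$, this will force $\mathbb{F}_B \subseteq S$, which is exactly the conclusion.

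First I would verify the easy closure properties. By construction $S$ contains $B$, and since the sum, difference and product of two integer polynomial expressions in finitely many elements of $B$ is again an integer polynomial expression in finitely many elements of $B$, the set $S$ is closed under $+$, $-$ and $\cdot$. Thus $S$ is a subring of $F$.

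The content of the lemma is closure under multiplicative inverses, which is where the finiteness of $F$ is crucial. For any $z \in S \cap F^*$, the multiplicative group $F^*$ is finite, so $z$ has finite multiplicative order $k$, meaning $z^k = 1$ and hence $z^{-1} = z^{k-1}$. Writing $z = P(b_1,\ldots,b_m)$ for some integer polynomial $P$ and $b_i \in B$, we get $z^{-1} = P(b_1,\ldots,b_m)^{k-1}$, which is still an integer polynomial expression in elements of $B$. In particular, picking any $b \in B$ (which is nonempty) and using $b \cdot b^{-1} = 1$ with $b^{-1} \in S$, we see $1 \in S$, so $S$ contains the prime subfield via integer multiples of $1$.

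Consequently $S$ is a subfield of $F$ containing $B$, and hence $\mathbb{F}_B \subseteq S$, which is precisely the statement of the lemma. The only step that requires any thought at all is the inversion step, and it hinges entirely on $F^*$ being a finite group; the rest is just bookkeeping about polynomial expressions. I would expect the argument to occupy only a few lines in the paper.
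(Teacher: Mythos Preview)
Your proof is correct and follows essentially the same approach as the paper: define the set of integer-polynomial expressions in elements of $B$, show it is closed under the field operations, and conclude that it contains $\mathbb{F}_B$. The paper is slightly terser---it observes $1\in Z$ via $b^{|F|-1}=1$ and $0\in Z$ via $p\cdot b=0$ and then asserts that $Z$ is a subfield (implicitly using that a finite subring of a field is a field)---whereas you spell out the multiplicative-inverse step explicitly via $z^{-1}=z^{k-1}$; but the underlying idea, that finiteness of $F^*$ makes inverses expressible as powers, is identical.
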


\begin{proof}
Let $Z$ be the set of elements $z\in \mathbb{F}_B$ satisfying the
designated property in this lemma.  Let $z_1 z_2$ be any two
elements of $Z$. Obviously, $z_1+z_2,z_1z_2\in Z$. We also note
$1\in Z$ (due to the fact that $(|F|-1)$-fold multiplication of any
non-zero element equals one) and $0\in Z$ (due to the fact that
$(|\mbox{char}( F)|)$-fold addition of any non-zero element equals
zero).
 This naturally implies that $Z$ is a subfield
of $F$. We can easily observe that $B$ is a subset of $Z$, and
consequently, $\mathbb{F}_B\subset Z$. But considering $Z$ is a
subset of $\mathbb{F}_B$, we are done. This concludes the proof.

\end{proof}

\begin{lemma}\label{theorem:polynomial}
Let $B$ be a subset of $F$ with at least two elements, and
$\mathbb{F}_{B}$ be the subfield generated by $B$. Then there exists
a polynomial of several variables with integer coefficients
$P(x_1,x_2,\ldots,x_{m})$ such that
$P(B,B,\ldots,B)=\mathbb{F}_{B}.$
\end{lemma}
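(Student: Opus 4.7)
The plan is to glue together the individual polynomials produced by Lemma \ref{lemma 22} into a single polynomial whose image over $B^N$ is the whole of $\mathbb{F}_B$. Since $|B|\geq 2$, the set $B^*=B\setminus\{0\}$ is nonempty and $\mathbb{F}_{B^*}=\mathbb{F}_B$, so Lemma \ref{lemma 22} applies to $B^*$. Enumerating $\mathbb{F}_B=\{z_1,\ldots,z_k\}$, I obtain for each $i$ an integer-coefficient polynomial $P_i$ together with a tuple $\vec{b}^{(i)}\in (B^*)^{m_i}\subseteq B^{m_i}$ such that $P_i(\vec{b}^{(i)})=z_i$. The inclusion $P(B,\ldots,B)\subseteq\mathbb{F}_B$ for any integer-coefficient $P$ is automatic, since $\mathbb{F}_B$ is a subfield containing $B$ together with the image of $\mathbb{Z}$ in $F$; the content of the lemma is the reverse inclusion.

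The main obstacle is to assemble the $P_i$ into one polynomial: a naive Lagrange-interpolation approach would introduce coefficients from $\mathbb{F}_B$ rather than $\mathbb{Z}$. I plan to work around this by absorbing any non-integer scalars into polynomial evaluations. Fix distinct $a,b\in B$ and set $c=(a-b)^{-1}\in\mathbb{F}_B$. Applying Lemma \ref{lemma 22} once more to $B^*$, there is an integer-coefficient polynomial $R$ and a tuple $\vec{u}\in (B^*)^s$ with $R(\vec{u})=c$. I then form the indicator polynomial
\[
\chi(y,\vec{w}):=R(\vec{w})(y-b),
\]
which has integer coefficients, satisfies $\chi(a,\vec{u})=1$, and vanishes whenever $y=b$ (for any $\vec{w}\in B^s$).

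With this in hand, I define
\[
P(\vec{x}_1,\ldots,\vec{x}_k,y_1,\vec{w}_1,\ldots,y_k,\vec{w}_k):=\sum_{i=1}^{k}P_i(\vec{x}_i)\,\chi(y_i,\vec{w}_i).
\]
To realize any given $z_j$, I set $\vec{x}_j=\vec{b}^{(j)}$, $(y_j,\vec{w}_j)=(a,\vec{u})$, and $y_i=b$ for each $i\neq j$, with the remaining $\vec{x}_i,\vec{w}_i$ filled in by arbitrary elements of $B$; all terms with $i\neq j$ then vanish and the $j$-th term evaluates to $z_j$. Hence $\mathbb{F}_B\subseteq P(B,\ldots,B)$, which together with the automatic reverse inclusion completes the argument.
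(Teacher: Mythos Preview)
Your strategy is sound, but there is a small technical gap in the construction of $\chi$: as written, $\chi(y,\vec{w})=R(\vec{w})(y-b)$ is \emph{not} a polynomial with integer coefficients, since $b$ is a fixed element of $B\subset F$ and the expansion $R(\vec{w})\,y - b\,R(\vec{w})$ has $-b$ appearing in its coefficients. The fix is immediate: introduce one more variable $v$, set $\chi(y,v,\vec{w})=R(\vec{w})(y-v)$, and always substitute $v=b$; then $\chi$ genuinely has integer coefficients and the rest of your argument goes through unchanged.

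Once this is repaired, your proof is correct but slightly less direct than the paper's. The paper observes that the switch need not take the value $1$ exactly: it is enough for it to take the value $0$ and some fixed nonzero value. Concretely, the paper multiplies each $P_z$ by a factor $(x_{m_z+2}-x_{m_z+1})$, which vanishes when the two new variables agree and equals $d-c$ when they are set to distinct $c,d\in B$; since $(d-c)\cdot\mathbb{F}_B=\mathbb{F}_B$, this already yields every element of $\mathbb{F}_B$. This sidesteps the extra invocation of Lemma~\ref{lemma 22} to produce $R$ with $R(\vec u)=(a-b)^{-1}$, and with it the need to pass to $B^*$.
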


\begin{proof}
By Lemma \ref{lemma 22}, for any element $z\in \mathbb{F}_B$, there
exists a polynomial $P_z$ of several variables with positive integer
coefficients $P_z(x_1^{(z)},x_2^{(z)},\ldots,x_{m_z}^{(z)})$ and a
sequence of elements $(b_1,b_2,\ldots,b_{m_z})\in B^{m_z}$ such that
$P_z(b_1,b_2,\ldots,b_{m_z})=z$. Let
\[Q_{z}(x_1^{(z)},x_2^{(z)},\ldots,x_{m_z+2}^{(z)}):=(x_{m_z+2}^{(z)}-x_{m_z+1}^{(z)})\cdot P_z(x_1^{(z)},x_2^{(z)},\ldots,x_{m_z}^{(z)})\]
and
\[P(\{x^{(z)}_i: z\in \mathbb{F}_B, 1\leq i\leq m_z+2\}):=\sum_{z\in \mathbb{F}_B}Q_z(x_1^{(z)},x_2^{(z)},\ldots,x_{m_z+2}^{(z)}).\]
Let $c,d\in B$ be two different elements. Then
$$\mathbb{F}_B=\sum_{z\in \mathbb{F}_B}\{0,(d-c)\cdot z\}\subset\sum_{z\in \mathbb{F}_B}Q_z(\underbrace{B,B,\ldots,B}_{m_z+2})
=P(\underbrace{B,B,\ldots,B}_{s})\subset\mathbb{F}_{B},$$ 
where
$s:=\sum_{z\in \mathbb{F}_B}(m_z+2)$. This finishes the
proof.
\end{proof}

The rest of this paper is devoted to proving Theorem \ref{theorem:main}.

\section{Proof of Theorem \ref{theorem:main}}

Let $A$ be a set satisfying the conditions of Theorem \ref{theorem:main}, and suppose that $|A+A|,|A\cdot{A}|\leq{K|A|}$.
 The aim is to show that $K\gg{\frac{|A|^{1/11}}{(\log{|A|})^{5/11}}}$.

At the outset, apply Lemma \ref{theorem:CPlun} to identify some
subset $A'\subset{A}$, with cardinality $|A'|\approx{|A|}$, so that
$$|A'+A'+A'+A'|\ll{\frac{|A+A|^3}{|A|^2}}\ll{K^3|A|}.$$
Since many more refinements of $A$ are needed throughout the proof,
this first change is made without a change in notation. So,
throughout the rest of the proof, when the set $A$ is referred to,
we are really talking about the large subset $A'$, which satisfies
the above inequality. In other words, we assume that
\begin{equation}
|A+A+A+A|\ll{K^3|A|}.
\label{finaltouch}
\end{equation}
Consider the point set $A\times{A}\subset{F\times{F}}$. The
multiplicative energy, $E(A)$, of $A$ is defined to be the number of
solutions to
\begin{equation}
\frac{a_1}{a_2}=\frac{a_3}{a_4},
\label{menergy}
\end{equation}
such that $a_1,a_2,a_3,a_4\in{A}$. Let $\mathcal{L}$ be the set of
all lines through the origin. By definition,
$$E(A)=\sum_{l\in{\mathcal{L}}}|l\cap({A\times{A}})|^2.$$
By pigeonholing, we can identify a popular dyadic group.
This is done by partitioning lines through the origin according to
their popularity. Speaking more precisely, let $\mathcal{L}_i$ be
 the set of all lines $l$ through the origin such that $2^i\leq{|l\cap{A\times{A}}|}<{|2^{i+1}|}$.
  Therefore the multiplicative energy may be rewritten in the form
$$E(A)=\sum_{j=0}^{\lfloor\log_2|A|\rfloor}\sum_{l\in{\mathcal{L}_j}}|l\cap({A\times{A}})|^2.$$
Then, by elementary piegeonholing, one may choose a particular
dyadic group, which contributes more than the average for this sum.
Therefore, there exists a set of $L$ lines, each supporting
$\approx{N}$ points from $A\times{A}$ such that
\begin{equation}
M:=LN^2\gg{\frac{E(A)}{\log_2|A|}}\geq{\frac{|A|^4}{|A\cdot{A}|\log_2|A|}}\geq{\frac{|A|^3}{K\log_2|A|}},
\label{dyadic}
\end{equation}
where the second inequality comes from the standard Cauchy-Schwarz
bound on the multiplicative energy (see (\cite{TV})). Refine the point set by now
considering only points that lie on these $L$ lines, and call this
set $P\subset{A\times{A}}$. Clearly, $|P|\approx{LN}$. Denote by
$\Xi$ the set of slopes through the origin in this refined set $P$.
More precisely, define $\Xi$ as follows:
$$\Xi:=\left\{\frac{b}{a}:(a,b)\in{P}\right\}.$$
The fact that $LN\leq{|A|^2}$ easily implies that
\begin{equation}
N\geq{\frac{M}{|A|^2}}.
\label{Nbound}
\end{equation}
Similarly, since $N\leq{|A|}$, it is clear that
\begin{equation}
L\geq{\frac{M}{|A|^2}}.
\label{Lbound}
\end{equation}
Let $A_x$ be the set of ordinates of $P$ for a fixed abscissa $x$,
and $B_y$ be the set of abscissae of $P$ for a fixed ordinate $y$,
that is,
\begin{align*}
A_x&=\{y:(x,y)\in{P}\}\\
B_y&=\{x:(x,y)\in{P}\}.\end{align*} A little more notation is
required still. For some element $\xi\in{\Xi}$, let $P_{\xi}$ be the
projection of points in $P$ on the line with equation $y=\xi{x}$
onto the x-axis. So,
$$P_{\xi}=\{x:(x,\xi{x})\in{P}\},$$
and since all lines with slope in $\Xi$ intersect $P$ with cardinality approximately $N$,
it follows that $|P_{\xi}|\approx{N}$ for all $\xi{\in{\Xi}}.$
Note also, since $P\subset{A\times{A}}$, that $P_{\xi}$ is a subset of $A$.

The following lemma is taken from Rudnev (\cite{mishaSP}).

\begin{lemma}\label{theorem:misha}
 There exists a popular abscissa $x_0$ and a
 popular ordinate $y_0$ (popular here means that $|A_{x_0}|,|B_{y_0}|\gg{\frac{LN}{|A|}}$),
 as well as a large subset $\tilde{A}_{x_0}\subseteq{A_{x_0}}$, with
\begin{equation}
|\tilde{A}_{x_0}|\gg{\frac{LM}{|A|^3}},
\label{mishalemma1}
\end{equation}
such that for every $z\in{\tilde{A}_{x_0}}$,
\begin{equation}
|\tilde{A}_{\tilde{z}}:=P_{z/x_{0}}\cap{B_{y_0}}|\gg{\frac{LMN}{|A|^4}}.
\label{mishalemma2}
\end{equation}

\end{lemma}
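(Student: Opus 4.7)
The plan is to set up a triple counting sum, reinterpret it combinatorially, apply Cauchy--Schwarz to get a lower bound, and then double-pigeonhole to find popular $x_0,y_0$ together with the required subset $\tilde{A}_{x_0}$.

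\textbf{Step 1 (counting).} Consider
\[S:=\sum_{x_0\in A}\sum_{y_0\in A}T(x_0,y_0),\qquad T(x_0,y_0):=\sum_{z\in A_{x_0}}|P_{z/x_0}\cap B_{y_0}|.\]
Unpacking the definitions, $S$ counts ordered triples $(p_1,p_2,p_3)\in P^3$ such that $p_1,p_3$ lie on a common line through the origin and $p_2,p_3$ share the same abscissa: here $p_1=(x_0,z)$, $p_3=(x,(z/x_0)x)$, $p_2=(x,y_0)$. Summing first over $p_3\in P$, the point $p_1$ ranges over the $\approx N$ points of $P$ on the line through $p_3$ and the origin, while $p_2$ ranges over $|A_{x(p_3)}|$ choices in the column of $p_3$. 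Hence $S\approx N\sum_{x\in A}|A_x|^2$, and Cauchy--Schwarz gives $\sum_x|A_x|^2\geq|P|^2/|A|\approx L^2N^2/|A|$, so
\[S\gg\frac{L^2N^3}{|A|}=\frac{LMN}{|A|}.\]

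\textbf{Step 2 (popularity).} Define the popular sets $X_0:=\{x\in A:|A_x|\gg LN/|A|\}$ and $Y_0:=\{y\in A:|B_y|\gg LN/|A|\}$ with sufficiently small constants so that the set $P':=P\cap(X_0\times Y_0)$ satisfies $|P'|\geq|P|/2$. After a further dyadic refinement restricting to lines that still support $\approx N$ points of $P'$, redo the Cauchy--Schwarz count of Step 1 inside $P'$ to show that the contribution to $S$ coming from $(x_0,y_0)\in X_0\times Y_0$ is still $\gg LMN/|A|$. Since $|X_0|,|Y_0|\leq|A|$, averaging yields a popular pair $(x_0,y_0)$ with
\[T(x_0,y_0)\gg\frac{LMN}{|A|^3}.\]

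\textbf{Step 3 (extracting $\tilde A_{x_0}$).} Fix such $x_0,y_0$ and set
\[\tilde A_{x_0}:=\Bigl\{z\in A_{x_0}:|P_{z/x_0}\cap B_{y_0}|\geq c\,\frac{LMN}{|A|^4}\Bigr\}\]
for a small constant $c$. Splitting $T(x_0,y_0)$ according to membership in $\tilde A_{x_0}$ and using $|P_{z/x_0}|\ll N$ together with $|A_{x_0}|\leq|A|$,
\[T(x_0,y_0)\leq|\tilde A_{x_0}|\cdot N+|A_{x_0}|\cdot c\,\frac{LMN}{|A|^4}\leq|\tilde A_{x_0}|\cdot N+c\,\frac{LMN}{|A|^3}.\]
Choosing $c$ small enough that the second term absorbs at most half of $T(x_0,y_0)$ forces $|\tilde A_{x_0}|\gg LM/|A|^3$, as required.

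\textbf{Main obstacle.} The delicate point is Step 2. The trivial bound $T(x_0,y_0)\leq|A_{x_0}||B_{y_0}|$ is not strong enough to localise a positive fraction of $S$ to popular $(x_0,y_0)$ when $N$ is small compared with $|A|$, so one must run the Cauchy--Schwarz argument \emph{after} passing to the popular sub-configuration $P'$ (or, equivalently, dyadically pigeonhole the distribution of $|A_x|$ and $|B_y|$) rather than trying to estimate the unpopular contribution directly.
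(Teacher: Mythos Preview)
The paper does not actually prove this lemma; it is quoted from Rudnev's paper and stated without proof. So there is no ``paper proof'' to compare against, and your proposal has to be judged on its own.

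Your outline is essentially correct. The triple-counting identity in Step~1 and the extraction argument in Step~3 are clean and accurate. The only point that needs sharpening is Step~2, precisely the place you flag as the main obstacle. You write that after passing to $P'=P\cap(X_0\times Y_0)$ one should perform ``a further dyadic refinement restricting to lines that still support $\approx N$ points of $P'$''. A genuine dyadic pigeonhole here would only give $L'$ lines each meeting $P'$ in $\approx N'$ points with $L'N'\gg LN/\log|A|$, and there is no a priori reason $N'\approx N$; this would cost logarithmic factors you do not want. What actually works is a Markov-type averaging: since $|P\setminus P'|\le 2c\,LN$ for the small constant $c$ in the definition of $X_0,Y_0$, at most $4cL$ of the $L$ lines can lose more than $N/2$ of their points, so at least $(1-4c)L$ lines still carry between $N/2$ and $2N$ points of $P'$. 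Restricting to those lines gives a set $P''\subset P'$ with $|P''|\approx LN$ on $\approx L$ lines of weight $\approx N$, and then your Step~1 computation applied verbatim to $P''$ yields $S''\gg LMN/|A|$ with no logarithmic loss. Because $P''\subset P'$, every $(x_0,y_0)$ arising is automatically popular, and because $P''\subset P$, the quantities $|P_{z/x_0}\cap B_{y_0}|$ in the original statement dominate their $P''$-analogues, so Step~3 goes through unchanged. With this clarification your argument is complete.
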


Since the sum-product estimate and the conditions of Theorem \ref{theorem:main} are invariant under dilation, we may
assume without loss of generality that $x_0=1$. This means that
elements of $A_{x_0}$ are also the popular slopes described above,
i.e.
$$A_{x_0}\subset{\Xi}.$$

\subsection{Application of the covering lemma}

Since Lemma \ref{theorem:covering} is applied in a similar manner
several times throughout the remainder of the proof, it is worthwhile
highlighting this in advance, in order to to avoid repetition.

\begin{application}\label{theorem:application}
If $A'\subset{A}$ and $\xi\in{\Xi}$, then $\pm{\xi{A'}}$ can be 90\% covered by `at most' $\frac{K|A|}{N}$ translates of $A$.

\end{application}

\begin{proof} 90\% of $\xi{A'}$ can be covered by `at most'
$$\frac{|\xi{A'}+\xi{P_{\xi}}|}{|\xi{P_{\xi}}|}\ll{\frac{|A+A|}{N}}\leq{\frac{K|A|}{N}},$$
translates of $\xi{P_{\xi}}$, which is a subset of $A$. Similarly,
90\% of $-\xi{A'}$ can be covered by `at most'
$$\frac{|-\xi{A'}-\xi{P_{\xi}}|}{|\xi{P_{\xi}}|}\ll{\frac{|A+A|}{N}}\leq{\frac{K|A|}{N}},$$
translates of $\xi{P_{\xi}}\subset{A}$.

\end{proof}

\subsection{Five Cases}

For a given set $B\subset{F}$, recall from \eqref{Rdefn} the definition of $R(A)$. The remainder of the proof is now divided into five cases corresponding to the nature of the sets $R(\tilde{A}_{x_0})$ and $R(B_{y_0})$.

\textbf{Case 1}: Suppose $R(\tilde{A}_{x_0})\neq{R(B_{y_0})}$.

Case 1.1: There is some element $r\in{R(\tilde{A}_{x_0})}$ such that $r\notin{R(B_{y_0})}$. Fix this $r=\frac{a_1-a_2}{a_3-a_4}$
 and elements $a_1,a_2,a_3,a_4\in{\tilde{A}_{x_0}}$ representing it. Since $r\notin{R(B_{y_0})}$, there exist only trivial solutions to
\begin{equation}
b_1+rb_2=b_3+rb_4,
\label{cseq}
\end{equation}
such that $b_1,b_2,b_3,b_4\in{B_{y_0}}$. Let $B_{y_0}'$
 be some subset of $B_{y_0}$, with cardinality $|B_{y_0}'|\approx{|B_{y_0}|}$.
 This subset is required for the benefit of applying the covering lemma, and can be specified later.
 The absence of non-trivial solutions to \eqref{cseq} implies that
$$|B_{y_0}'|^2={|B_{y_0}'+rB_{y_0}'|}.$$
So,
$$\left(\frac{LN}{|A|}\right)^2\ll{|a_1B_{y_0}'-a_2B_{y_0}'+a_3B_{y_0}'-a_4B_{y_0}'|}.$$
By Application \ref{theorem:application}, each of
$a_1B_{y_0},-a_2B_{y_0},a_3B_{y_0}$ and $-a_4B_{y_0}$ can be 90\%
covered by $\ll{\frac{K|A|}{N}}$ translates of $A$. By choosing an
appropriate subset $B_{y_0}'$, we can ensure that each of the four
terms in the above sumset get fully covered. Therefore, Application
\ref{theorem:application} is used four times in order to deduce that
$$\left(\frac{LN}{|A|}\right)^2\ll{\left(\frac{K|A|}{N}\right)^4|A+A+A+A|}\ll{\left(\frac{K|A|}{N}\right)^4K^3|A|}.$$
The above inequality can be rearranged into the form
$$M^2N^2\ll{K^7|A|^7}.$$
An application of \eqref{Nbound} and then subsequently
\eqref{dyadic} implies that
$$K\gg{\frac{|A|^{1/11}}{(\log_2|A|)^{4/11}}},$$
which is a better bound than required.

Case 1.2: There is some $r\in{R(B_{y_0})}$ such that
$r\notin{R(\tilde{A}_{x_0})}$. Fix this $r=\frac{p-q}{s-t}$ as well
as elements $p,q,s,t\in{B_{y_0}}$ representing $r$.  By the
definition of $B_{y_0}$, each of $\frac{y_0}{p}, \frac{y_0}{q},
\frac{y_0}{s}$ and $\frac{y_0}{t}$ belongs to $\Xi$, the set of
slopes supporting $\approx{N}$ points from $P$. It can be observed
that $P$ is symmetric through the line $y=x$, thus
$\frac{p}{y_0},\frac{q}{y_0},\frac{s}{y_0}$ and $\frac{t}{y_0}$ are
also elements of $\Xi$.

Let $\tilde{A}_{x_0}'$ be a positively proportioned subset of
$\tilde{A}_{x_0}$, to be chosen later in order to apply the covering
lemma. Now, since $r\notin{R(\tilde{A}_{x_0}')}$, there exist only
trivial solutions to the equation
$$a_1+ra_2=a_3+ra_4,$$
such that $a_1,a_2,a_3,a_4\in{\tilde{A}_{x_0}'}$. Therefore,
\begin{align*}
|\tilde{A}_{x_0}|^2\approx{|\tilde{A}_{x_0}'|^2}&={|\tilde{A}_{x_0}'+r\tilde{A}_{x_0}'|}
\\&\leq{\left|\frac{p}{y_0}\tilde{A}_{x_0}'-\frac{q}{y_0}\tilde{A}_{x_0}'+\frac{s}{y_0}\tilde{A}_{x_0}'-\frac{t}{y_0}\tilde{A}_{x_0}'\right|}.
\end{align*}
By Application \ref{theorem:application},
each of $\frac{p}{y_0}\tilde{A}_{x_0},-\frac{q}{y_0}\tilde{A}_{x_0},\frac{s}{y_0}\tilde{A}_{x_0}$
and $-\frac{t}{y_0}\tilde{A}_{x_0}$ can be 90\% covered by $\ll{\frac{K|A|}{N}}$ translates of $A$.
The subset $\tilde{A}_{x_0}'$ may be chosen earlier so that
each of $\frac{p}{y_0}\tilde{A}_{x_0}',-\frac{q}{y_0}\tilde{A}_{x_0}',\frac{s}{y_0}\tilde{A}_{x_0}'$
and $-\frac{t}{y_0}\tilde{A}_{x_0}'$ are covered in their entirety by the translates of $A$.

Therefore, four applications of the covering lemma, along with
\eqref{mishalemma1} and \eqref{finaltouch} yield
$$\left(\frac{LM}{|A|^3}\right)^2\ll{\left(\frac{K|A|}{N}\right)^4|A+A+A+A|}\ll{\frac{K^7|A|^5}{N^4}}.$$
This can be rearranged to give
$$M^4\ll{K^7|A|^{11}}.$$
Finally, apply \eqref{dyadic} to deduce that
$$K\gg{\frac{|A|^{1/11}}{(\log_2|A|)^{4/11}}}.$$

From this point forward, we may assume that $R(\tilde{A}_{x_0})=R(B_{y_0})$.

\textbf{Case 2}: Suppose
$1+R(\tilde{A}_{x_0})\not\subset{R(\tilde{A}_{x_0})}=R(B_{y_0})$.

So, there exist elements $p,q,s,t\in{\tilde{A}_{x_0}}$ such that
$$r:=1+\frac{p-q}{s-t}\notin{R(\tilde{A}_{x_0})}=R(B_{y_0}).$$
Let $\tilde{A}_{\tilde{p}}$ be as defined in Lemma
\ref{theorem:misha}. Identify two subsets of positive proportion,
$\tilde{A}_{\tilde{p}}'\subset{\tilde{A}_{\tilde{p}}}$ and
$B_{y_0}'\subset{B_{y_0}}$, to be specified later for the purpose of
applying the covering lemma. Applying Lemma \ref{theorem:CPlun} with
$X=B_{y_0}'$, there exists a further subset
$B_{y_0}''\subseteq{B_{y_0}'}$, with
$|B_{y_0}''|\approx{|B_{y_0}'|}$, such that
\begin{equation}
\left|B_{y_0}''+\tilde{A}_{\tilde{p}}'+\left(\frac{p-q}{s-t}\right)\tilde{A}_{\tilde{p}}'\right|\ll{\frac{|A+A|}{|B_{y_0}|}\left|B_{y_0}'+\left(\frac{p-q}{s-t}\right)\tilde{A}_{\tilde{p}}'\right|}
\label{plunnecke}
\end{equation}
Now, since $B_{y_0}''$ and $\tilde{A}_{\tilde{p}}'$ are subsets of
$B_{y_0}$, there exist only trivial solutions to
$$x_1+rx_2=x_3+rx_4,$$
such that $x_1,x_3\in{B_{y_0}''}$ and
$x_2,x_4\in{\tilde{A}_{\tilde{p}}'}$, as otherwise
$r\in{R(B_{y_0})}$. This implies that
$$\left(\frac{LN}{|A|}\right)\left(\frac{LMN}{|A|^4}\right)\ll{|B_{y_0}''||\tilde{A}_{\tilde{p}}'|}={|B_{y_0}''+r\tilde{A}_{\tilde{p}}'|},$$
where the leftmost inequality is a consequence of
\eqref{mishalemma2} and the lower bound on $|B_{y_0}|$ established
earlier in Lemma \ref{theorem:misha}. Combining this inequality with
\eqref{plunnecke} and rearranging gives
\begin{equation}
\left(\frac{LN}{|A|}\right)^2\left(\frac{LMN}{|A|^4}\right)\ll{K|A|\left|B_{y_0}'+\left(\frac{p-q}{s-t}\right)\tilde{A}_{\tilde{p}}'\right|}.
\label{messy}
\end{equation}
Clearly,
$$\left|B_{y_0}'+\left(\frac{p-q}{s-t}\right)\tilde{A}_{\tilde{p}}'\right|\leq{|sB_{y_0}'-tB_{y_0}'+p\tilde{A}_{\tilde{p}}'-q\tilde{A}_{\tilde{p}}'|}.$$
Note also that
$p\tilde{A}_{\tilde{p}}\subseteq{pP_{p}}\subseteq{A}$. Therefore,
$$\left|B_{y_0}'+\left(\frac{p-q}{s-t}\right)\tilde{A}_{\tilde{p}}'\right|\ll{|sB_{y_0}'-tB_{y_0}'+A-q\tilde{A}_{\tilde{p}}'|}.$$
Finally, three applications of the covering lemma are required. By
Application \ref{theorem:application}, $sB_{y_0}$ and $-tB_{y_0}$
can be 90\% covered by `at most' $\frac{K|A|}{N}$ translates of $A$.
The earlier choice of $B_{y_0}'$ should be made so as to ensure that
both $sB_{y_0}'$ and $-tB_{y_0}'$ get fully covered by these
translates of $A$. Similarly, $-q\tilde{A}_{\tilde{p}}'$ can be
fully covered by $\ll{\frac{K|A|}{N}}$ translates of $A$. It follows
that
$$\left|B_{y_0}'+\left(\frac{p-q}{s-t}\right)\tilde{A}_{\tilde{p}}'\right|\ll{|A+A+A+A|\left(\frac{K|A|}{N}\right)^3}\ll{\frac{K^6|A|^4}{N^3}}.$$
Combining the above with \eqref{messy} and rearranging gives
$$M^4\ll{K^7|A|^{11}}.$$
Finally, an application of \eqref{dyadic}, leads to the conclusion
that
$$K\gg{\frac{|A|^{1/11}}{(\log_2|A|)^{4/11}}}.$$

\textbf{Case 3}: Suppose $\tilde{A}_{x_0}\nsubseteq{R(\tilde{A}_{x_0})}=R(B_{y_0})$. 

Then we can find an element
$z\in{\tilde{A}_{x_0}}$ such that $z\notin{R(B_{y_0})}$. Recall from
its definition in \eqref{mishalemma2}, the set
$\tilde{A}_{\tilde{z}}$, which is a subset of $B_{y_0}$. Since
$z\notin{R(B_{y_0})}$, there exist only trivial solutions to
$$a_1+za_2=a_3+za_4,$$
such that $a_1,a_3\in{B_{y_0}}$ and
$a_2,a_4\in{\tilde{A}_{\tilde{z}}}$. Therefore,
$$|B_{y_0}||\tilde{A}_{\tilde{z}}|={|B_{y_0}+z\tilde{A}_{\tilde{z}}|}\leq{|A+A|},$$
where the rightmost inequality uses the fact that
$z\tilde{A}_{\tilde{z}}\subset{A}$. Applying cardinality bounds
established earlier, we deduce that
$$\left(\frac{LN}{|A|}\right)\left(\frac{LMN}{|A|^4}\right)\ll{K|A|},$$
and so
$$K\gg{\frac{|A|^{1/4}}{(\log_2|A|)^{3/4}}}.$$

\textbf{Case 4}: Suppose
$\tilde{A}_{x_0}R(\tilde{A}_{x_0})\nsubseteq{R(\tilde{A}_{x_0})}=R(B_{y_0})$.

So, there must exist some $a,b,c,d,e\in{\tilde{A}_{x_0}}$ such that
$$r=a\frac{b-c}{d-e}\notin{R(B_{y_0})}.$$
Let $Y_1$ be a subset of $B_{y_0}$ to be determined later, and
recall the set $\tilde{A}_{\tilde{a}}\subset{B_{y_0}}$. Since
$r\notin{R(B_{y_0})}$, there exist only trivial solutions to
$$a_1+ra_2=a_3+ra_4,$$
such that $a_1,a_3\in{Y_1}$ and $a_2,a_4\in{\tilde{A}_{\tilde{a}}}$.
This implies that
$$|Y_1+r\tilde{A}_{\tilde{a}}|={|Y_1||\tilde{A}_{\tilde{a}}|}.$$
Let $Y_2$ be some subset of $P_b$ to be specified later. Lemma
\ref{theorem:SPlun} can be applied with $X=\frac{b-c}{d-e}Y_2$ to
bound the left hand side of the above inequality. Consequently,
\begin{align*}
|Y_1||\tilde{A}_{\tilde{a}}||Y_2|&\ll{\left|Y_1+\left(\frac{b-c}{d-e}\right)Y_2\right||a\tilde{A}_{\tilde{a}}+Y_2|}
\\&\leq{|dY_1-eY_1+bY_2-cY_2||A+A|}.
\end{align*}
The last step in the above uses the fact that
$a\tilde{A}_{\tilde{a}}\subset{A}$. Next we must apply the covering
lemma. By Application \ref{theorem:application}, $-cP_b$ can be 90\%
covered by $\ll{\frac{K|A|}{N}}$ translates of $A$.
$Y_2\subset{P_b}$ can be chosen earlier so that $-cY_2$ gets fully
covered by these translates and $|Y_2|\approx{|P_b|}\approx{N}$.
Similarly, $-e\tilde{A}_{\tilde{d}}$ can be 90\% covered by
$\ll{\frac{K|A|}{N}}$ translates of $A$. $Y_1$ can be chosen to be a
subset of $\tilde{A}_{\tilde{d}}$ such that $-eY_1$ gets fully
covered by these translates and
$|Y_1|\approx{|\tilde{A}_{\tilde{d}}|}$. After applying the covering
lemma twice, it follows that
$$|\tilde{A}_{\tilde{d}}||\tilde{A}_{\tilde{a}}|N\ll{|dY_1+A+bY_2+A||A+A|\left(\frac{K|A|}{N}\right)^2}.$$
Furthermore, $Y_1$ being a subset of $\tilde{A}_{\tilde{d}}$ implies
that $dY_1\subset{A}$, and so there is no need to apply the covering
lemma for this term. Similarly, $bY_2\subset{A}$ and hence
$$|\tilde{A}_{\tilde{d}}||\tilde{A}_{\tilde{a}}|N\ll{|A+A+A+A||A+A|\left(\frac{K|A|}{N}\right)^2}.$$
After applying \eqref{mishalemma2} and \eqref{finaltouch}, then
rearranging, we deduce
$$\left(\frac{LMN}{|A|^4}\right)^2N^3\ll{K^6|A|^4},$$
and thus
$$M^4N\ll{K^6|A|^{12}}.$$
Applying \eqref{Nbound} and then \eqref{dyadic}, it follows that
$$K\gg{\frac{|A|^{1/11}}{(\log_2|A|)^{5/11}}}.$$

\textbf{Case 5} - Suppose Cases $1\sim 4$ don't happen. Then in
particular we have
\begin{align}\label{338}
\tilde{A}_{x_0}&\subset R(\tilde{A}_{x_0});\\
1+R(\tilde{A}_{x_0})&\subset R(\tilde{A}_{x_0});\\
\tilde{A}_{x_0}R(\tilde{A}_{x_0})&\subset R(\tilde{A}_{x_0}).
\end{align}
Since $|\tilde{A}_{x_0}R(\tilde{A}_{x_0})|\geq|R(\tilde{A}_{x_0})|$,
\begin{align*}
\tilde{A}_{x_0}R(\tilde{A}_{x_0})=R(\tilde{A}_{x_0}).
\end{align*}
Noting $R(\tilde{A}_{x_0})$ is closed under reciprocation,
\begin{align*}
\frac{R(\tilde{A}_{x_0})}{\tilde{A}_{x_0}}=R(\tilde{A}_{x_0}).
\end{align*}
Given $a,x,y,z,w\in \tilde{A}_{x_0}$ with $z\neq w$,
\[a+\frac{x-y}{z-w}=a\cdot\left(1+\frac{1}{a}\cdot\frac{x-y}{z-w}\right)\in R(\tilde{A}_{x_0}).\]
This implies
\begin{align*}
\tilde{A}_{x_0}+R(\tilde{A}_{x_0})=R(\tilde{A}_{x_0}).
\end{align*}
Noting $R(\tilde{A}_{x_0})$ is additively symmetric
($R(\tilde{A}_{x_0})=-R(\tilde{A}_{x_0})$),
\begin{align*}
R(\tilde{A}_{x_0})-\tilde{A}_{x_0}=R(\tilde{A}_{x_0}).
\end{align*}
We also note
 \begin{align*}
 \tilde{A}_{x_0}\tilde{A}_{x_0}+R(\tilde{A}_{x_0})\subset \tilde{A}_{x_0}\left(\tilde{A}_{x_0}+\frac{R(\tilde{A}_{x_0})}{\tilde{A}_{x_0}}\right)=\tilde{A}_{x_0}(\tilde{A}_{x_0}+R(\tilde{A}_{x_0}))=\tilde{A}_{x_0}R(\tilde{A}_{x_0})=R(\tilde{A}_{x_0}).
 \end{align*}
Similarly,
 \begin{align*}
 \tilde{A}_{x_0}^{(n)}+R(\tilde{A}_{x_0})=R(\tilde{A}_{x_0}),
 \end{align*}
where $\tilde{A}_{x_0}^{(n)}$ is the $n$-fold productset of $\tilde{A}_{x_0}$. Consequently, for any polynomial of several variables with integer coefficients $P(x_1,x_2,\ldots,x_m)$,
 \begin{align*}
 P(\tilde{A}_{x_0},\tilde{A}_{x_0},\ldots,\tilde{A}_{x_0})+R(\tilde{A}_{x_0})=R(\tilde{A}_{x_0}).
 \end{align*}
 Applying Lemma \ref{theorem:polynomial} in the last section,  we have
$\mathbb{F}_{\tilde{A}_{x_0}}+R(\tilde{A}_{x_0})=R(\tilde{A}_{x_0}),$
where $\mathbb{F}_{\tilde{A}_{x_0}}$ is the subfield generated by
$\tilde{A}_{x_0}$. Since
\begin{align*}\mathbb{F}_{\tilde{A}_{x_0}}\subset
\mathbb{F}_{\tilde{A}_{x_0}}+R(\tilde{A}_{x_0})=R(\tilde{A}_{x_0})\subset \mathbb{F}_{\tilde{A}_{x_0}},
\end{align*}
we get $$R(\tilde{A}_{x_0})=\mathbb{F}_{\tilde{A}_{x_0}}.$$ By the
assumptions of Theorem \ref{theorem:main} and (\ref{338}),
\begin{align}\label{319}
|R(\tilde{A}_{x_0})|=|\mathbb{F}_{\tilde{A}_{x_0}}|\geq|\mathbb{F}_{\tilde{A}_{x_0}}\cap A|^2\geq|\mathbb{F}_{\tilde{A}_{x_0}}\cap \tilde{A}_{x_0}|^2=|\tilde{A}_{x_0}|^2.
\end{align}
By Lemma \ref{lemma 21}, there exist four elements
$z_1,z_2,z_3,z_4\in \tilde{A}_{x_0}$ such that for any subset $\tilde{A}_{x_0}'\subset \tilde{A}_{x_0}$
  with $|\tilde{A}_{x_0}'|\approx|\tilde{A}_{x_0}|$,
  $$|(z_1-z_2)\tilde{A}_{x_0}'+(z_3-z_4)\tilde{A}_{x_0}'|\approx|\tilde{A}_{x_0}|^2.$$
By Application \ref{theorem:application}, each of $z_1\tilde{A}_{x_0}$, $-z_2\tilde{A}_{x_0}$, $z_3\tilde{A}_{x_0}$ and $-z_4\tilde{A}_{x_0}$ can be 90\% covered by $\ll{\frac{K|A|}{N}}$ translates of $A$. Therefore, there exists a subset $\tilde{A}_{x_0}'\subset \tilde{A}_{x_0}$, with $|\tilde{A}_{x_0}'|\geq{0.6|\tilde{A}_{x_0}|}$, such that the sets $z_i\tilde{A}_{x_0}'$ are covered completely by the translates of $A$. Consequently,
\[|\tilde{A}_{x_0}|^2\ll|z_1\tilde{A}_{x_0}'-z_2\tilde{A}_{x_0}'+z_3\tilde{A}_{x_0}'-z_4\tilde{A}_{x_0}'|\ll\left(\frac{K|A|}{N}\right)^4\cdot|A+A+A+A|,\]
which implies
$$M^4\ll{|A|^{11}K^7},$$
and thus
$$K\gg{\frac{|A|^{1/11}}{(\log_2|A|)^{4/11}}}.$$

\begin{flushright}
\qedsymbol
\end{flushright}

\bibliographystyle{plain}
\bibliography{reviewbibliography}

\end{document}